\newtheorem{thm}{Theorem}[section]
\newtheorem{cor}[thm]{Corollary}
\newtheorem{lema}[thm]{Lemma}
\newtheorem{prop}[thm]{Proposition}
\theoremstyle{definition}
\theoremstyle{remark}
\newtheorem{rem}[thm]{Remark}
\numberwithin{equation}{section}
\newcommand{\R}{\mathbb R}
\def\C{\mathbf {C}}
\def\c{\mathbf {c}}
\newcommand{\ve}{\varepsilon}
\newcommand{\lam}{\lambda}
\title{Hardy inequalities in fractional Orlicz-Sobolev spaces}
\author[A. M. Salort]
{Ariel M.  Salort}%
\address{Departamento de Matem\'atica, FCEyN - Universidad de Buenos Aires and
\hfill\break \indent IMAS - CONICET -  Ciudad Universitaria, Pabell\'on I (1428) Av. Cantilo s/n. \hfill\break \indent Buenos Aires, Argentina.}
\email[A.M. Salort]{asalort@dm.uba.ar}
\urladdr{http://mate.dm.uba.ar/~asalort}
\begin{document}

\subjclass[2010]{46E30, 35R11, 45G05}

\keywords{Fractional order Sobolev spaces, nonlocal eigenvalues, $g-$laplace operator, nonlocal Hardy inequalities}

\begin{abstract}
In this article we prove both norm and modular Hardy inequalities for a class functions in one-dimensional fractional Orlicz-Sobolev spaces. 
\end{abstract}

\vspace*{-1cm}
\maketitle
\vspace{-1cm}
\setlength{\parskip}{0.06em}
\setlength{\parskip}{0.5em}

\section{Introduction}

In the early 1920's,  in the seminal article \cite{Ha},  G.H. Hardy  obtained inequalities of the form
\begin{equation} \label{hardy.intro}
\int_0^\infty \frac{|u(x)|^p}{x^{p}} \,dx \leq \left(\frac{p}{p-1} \right)^p \int_0^\infty |u'(x)|^p\,dx
\end{equation}
where $p>1$ and $u$ is a nonnegative measurable function defined on $(0,\infty)$.

Throughout  the decades that followed, many authors contributed to characterize the family of weights $v,w$ and powers $p,q$ for which inequalities of the type
$$
\left(\int_0^\infty |u(x)|^p v(x)\,dx\right)^\frac1p \leq \C_H \left(\int_0^\infty |u'(x)|^q w(x)\,dx\right)^\frac1q
$$
are valid for a suitable   positive constant $\C_H$ independent of $u$. See  for instance   the pioneering works \cite{Brandly, Talenti,Tom}. We also refer  to the   seminal books \cite{KP, OK}.

Orlicz spaces play a fundamental role when describing  phenomena with non-standard growth. See  \cite{KR,KJF,RR}. Generalizations of Hardy type inequalities to the Orlicz space structure provide  for a family of inequalities admitting behaviors more general than powers. In the last years many authors leaded the task of characterizing  the class of admissible  weights $v,w,a,b$ and Young functions $P,Q$ for which is valid an inequality as follows
$$
Q^{-1}\left(\int_0^\infty Q(a(x)|Tu(x)|) v(x)\,dx \right) \leq \C_H P^{-1} \left( \int_0^\infty P(  b(x)|u(x)|)w(x)\,dx \right),
$$
where $Tu(x)=\int_0^x \mathcal{K}(x,y)u(y)\,dy$ is the generalized Hardy operator and $\C_H$ is a positive constant independent of $u$. For more details we refer  to \cite{BK1,BK2,HL,Lai}  and  references therein. In contrast with the $L^p$ case, Hardy inequalities in integral form may differ from norm inequalities. See \cite{BK1,Cia}.

Nonlocal Hardy inequalities were object of study in the last years. The nonlocal counterpart of \eqref{hardy.intro}, for suitable values of $p>1$ and $s\in(0,1)$, takes the form
$$
\int_{0}^\infty \frac{|u(x)|^p}{x^{sp}}\,dx \leq \C_H \int_0^\infty\int_0^\infty \frac{|u(x)-u(y)|^p}{|x-y|^{1+sp}}\,dxdy
$$
for functions in an appropriated  Sobolev space. Hardy inequalities for the fractional $p-$Laplacian  date back to the early 1960's and were derived independently in \cite{Gris} and \cite{J}. For the case $p=2$ see also \cite{AAS, ArSm}. See also \cite{KMP} for a different approach.

Up to   our knowledge, the only research on  Hardy inequalities in a non-local framework with  non-standard growth was  made in \cite{ACPS} for  values of $s\in(0,1)$ close to 0. See also \cite{MSV}. Therefore, the main scope of this paper is to study the validity of these  inequalities in the fractional order Orlicz-Sobolev spaces introduced in \cite{ACPS, FBS}, in both integral and norm form  for \emph{big} values of $s\in (0,1)$, that is, when $s>1/p^-$, where $p^->1$ is a fixed constant.  

In order to state our  results, recall that  a \emph{Young function} $G$ is a continuous, nonnegative, strictly increasing and convex function on $\R_+:=[0,\infty)$ (see Section \ref{sec2}), for which we assume the  growth condition
\begin{equation} \label{cond.intro} \tag{L}
1<p^-\leq \tfrac{tg(t)}{G(t)}\leq p^+<\infty \quad \text{for all }t>0
\end{equation}
for fixed constants $p^\pm$, where $g=G'$.  Given a Young function $G$, we define 
$$
L^G(\R_+):=\{u\colon\R_+\to \R \text{ measurable}\colon \Phi_G(u)<\infty\}.
$$
The \emph{modular} and the \emph{Luxemburg norm} of $u\in L^G(\R_+)$, respectively,  are defined as
$$
\Phi_{G}(u)=\int_0^\infty G(|u(x)|)\,dx \qquad \text{and}\qquad  \|u\|_G := \inf\left\{\lambda>0\colon \Phi_G\left(\frac{u}{\lambda}\right)\le 1\right\}.
$$
Given  $s\in(0,1)$,  we define the \emph{fractional Orlicz-Sobolev space}  as (see \cite{FBS})
$$
W^{s,G}(\R_+):=\left\{ u\in L^G(\R_+) \text{ such that } \Phi_{s,G}(u)<\infty \right\}.
$$
Here the \emph{fractional modular} of $u\in W^{s,G}(\R_+)$ is defined as 
$$
\Phi_{s,G}(u):=
  \int_{0}^\infty  \int_{0}^\infty G\left( \frac{|u(x)-u(y)|}{|x-y|^s} \right) \frac{ dx\,dy}{|x-y|}.
$$
These spaces are endowed with the so-called \emph{Luxemburg norm} defined as
$$
\|u\|_{s,G} := \|u\|_G + [u]_{s,G},
$$
where the  {\em $(s,G)$-Gagliardo semi-norm} reads as 
$
[u]_{s,G} :=\inf\left\{\lambda>0\colon \Phi_{s,G}\left(\frac{u}{\lambda}\right)\le 1\right\}.
$

We describe now our results. As we will see, the proof of our non-local modular inequality  is based on an astute  application of the following  modular Hardy inequality for the local  Orlicz-Sobolev space $W^{1,G}(\R_+)$ (see Lemma \ref{lema.loca} for details)
\begin{equation} \label{hardy.orlicz.local2.intro}
\int_0^\infty G\left(\left| \frac{1}{x^{s}}\int_0^x \frac{u(t)}{t}\,dt \right|\right)\,dx \leq \c_H \int_0^\infty G\left(   \frac{|u(x)|}{x^{s}}\right)\,dx.
\end{equation}
Following \cite{BK}, inequality \eqref{hardy.orlicz.local2.intro} can be obtained from the local norm  inequalities studied in \cite{Mal2,Pal}.

Finally, for simplicity in our notation,  given a Young function $G$ satisfying \eqref{cond.intro} we define the functions 
\begin{align} \label{xxx}
\psi_G(x)=
\begin{cases}
x^{p^+} &\text{if } x\geq 1\\
x^{p^-} &\text{if } x< 1\\
\end{cases} \qquad \text{and} \qquad 
\phi_G(x)=
\begin{cases}
x^{1/p^-} &\text{if } x\geq 1\\
x^{1/p^+} &\text{if } x< 1\\
\end{cases}.
\end{align}

With these preliminaries, our first result reads as follows.

\begin{thm} \label{hardy}
Let $G$ be a Young function satisfying \eqref{cond.intro} and let $s\in(0,1)$ be such that $sp^->1$. Then  for all $u\in W^{s,G}(\R_+)$ such that $
\lim_{x\to 0} \frac{1}{x}\int_0^x u(x)\,dx = u_0,
$
the following inequality holds
$$
\int_0^\infty G\left( \frac{ |u(x)-u(x_0)| }{x^s} \right)\,dx \leq \C_H \Phi_{s,G}(u), \qquad  \C_H:=\C(1+\c_H)
$$
where $\c_H$ is given in $\eqref{hardy.orlicz.local2.intro}$ and $\C:=2^{p+}$ is the doubling constant for $G$.
\end{thm}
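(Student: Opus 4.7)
The strategy is to introduce the running average $\bar u(x):=\frac{1}{x}\int_0^x u(t)\,dt$, split the deviation $u(x)-u_0$ as
$$
u(x)-u_0 \;=\;\bigl(u(x)-\bar u(x)\bigr)\;+\;\bigl(\bar u(x)-u_0\bigr),
$$
and estimate each piece separately. By the doubling property $G(a+b)\le \mathbf C(G(a)+G(b))$ with $\mathbf C=2^{p^+}$, this reduces the theorem to proving, for a constant independent of $u$,
$$
I_1:=\int_0^\infty G\!\left(\tfrac{|u(x)-\bar u(x)|}{x^{s}}\right)dx \;\lesssim\; \Phi_{s,G}(u),\qquad
I_2:=\int_0^\infty G\!\left(\tfrac{|\bar u(x)-u_0|}{x^{s}}\right)dx \;\lesssim\; \Phi_{s,G}(u).
$$

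For $I_1$ the idea is purely convexity. Writing $u(x)-\bar u(x)=\frac{1}{x}\int_0^x(u(x)-u(t))\,dt$ and using Jensen's inequality with the probability measure $\frac{dt}{x}$ on $(0,x)$,
$$
G\!\left(\tfrac{|u(x)-\bar u(x)|}{x^{s}}\right)\le \frac{1}{x}\int_0^x G\!\left(\tfrac{|u(x)-u(t)|}{x^{s}}\right)dt.
$$
Since $t\in(0,x)$ gives $x-t\le x$, the monotonicity of $G$ lets me replace $x^{s}$ by $(x-t)^{s}$ in the argument and $\frac{1}{x}$ by $\frac{1}{x-t}$ in the outer factor; both go the right way. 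Integrating in $x$ yields
$$
I_1 \;\le\; \int_0^\infty\!\!\int_0^x G\!\left(\tfrac{|u(x)-u(t)|}{(x-t)^{s}}\right)\frac{dt\,dx}{x-t}\;=\;\tfrac12\,\Phi_{s,G}(u),
$$
by symmetry of the Gagliardo integrand.

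The estimate for $I_2$ is where the hypothesis $sp^->1$ enters, via the local modular Hardy inequality \eqref{hardy.orlicz.local2.intro}. The elementary identity $(x\bar u(x))'=u(x)$ gives $\bar u\,'(t)=\frac{u(t)-\bar u(t)}{t}$ a.e., so combining with $\bar u(\varepsilon)\to u_0$ (the hypothesis on $u$),
$$
\bar u(x)-u_0 \;=\;\int_0^x \frac{u(t)-\bar u(t)}{t}\,dt.
$$
This is exactly the operator appearing in \eqref{hardy.orlicz.local2.intro}. Applying that inequality to $w(t):=u(t)-\bar u(t)$ yields
$$
I_2 \;\le\; \mathbf c_H \int_0^\infty G\!\left(\tfrac{|u(x)-\bar u(x)|}{x^{s}}\right)dx \;=\;\mathbf c_H\,I_1 \;\le\; \tfrac{\mathbf c_H}{2}\,\Phi_{s,G}(u).
$$
Summing the two bounds with the doubling factor produces the claim with $\C_H=\mathbf C(1+\mathbf c_H)$.

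The main technical point to watch is the justification of the identity for $\bar u(x)-u_0$: one needs $\bar u$ absolutely continuous on $(\varepsilon,x)$ for every $\varepsilon>0$ and the limit at $0$ passed inside, which follows from $u\in L^G(\R_+)\subset L^1_{\mathrm{loc}}(\R_+)$ and the hypothesis. Apart from this, the proof is a clean chain of convexity and monotonicity steps plus one invocation of the local inequality \eqref{hardy.orlicz.local2.intro}; the role of $sp^->1$ is hidden entirely inside that local inequality, which is the only nontrivial ingredient.
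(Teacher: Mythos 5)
Your proposal is correct and follows essentially the same route as the paper: the same splitting via the running average $v=u-\bar u$, the same Jensen/monotonicity bound showing $x^{-s}v\in L^G(\R_+)$ with $\int_0^\infty G(|v|/x^s)\,dx\le\Phi_{s,G}(u)$, the same identity $\bar u(x)-u_0=\int_0^x\frac{v(t)}{t}\,dt$, and the same invocation of the local modular inequality \eqref{hardy.orlicz.local2.intro} followed by the doubling property. Your observation that the one-sided Gagliardo integral is only $\tfrac12\Phi_{s,G}(u)$ by symmetry is a small (valid) refinement the paper does not exploit.
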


\begin{rem}
The constant $\C_H$ in Theorem \ref{hardy} can be computed explicitly   as 
$$
\C_H=\C(1+\c_H)= 2^{p^+}\left( 1+ \psi_G\left( \frac{p^-}{sp^--1}\right) \right).
$$
\end{rem}
\begin{rem} \label{remark.111}
Observe that $u\in W^{s,G}(\R_+)$ is in fact continuous in a neighborhood $\mathcal{O}$ of the origin  when $sp^->1$ due to the embedding  of $W^{s,G}(\mathcal{O})$ into the space $C^{0,s-1/p^-}(\mathcal{O})$ of H\"older continuous functions (see Proposition \ref{continua}).
\end{rem}

From the modular inequality it is easy to deduce a norm inequality.
\begin{cor} \label{hardy.cor}
With the same assumptions of Theorem \ref{hardy},
$$
\left\| \frac{u - u_0}{x^s}\right\|_G  \leq \phi_G(\C_H) [u]_{s,G}
$$
where $\phi_G$ is given in \eqref{xxx} and $\C_H$ is the constant given in Theorem \ref{hardy}.
\end{cor}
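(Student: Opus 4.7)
The plan is to deduce the norm estimate from the modular inequality of Theorem \ref{hardy} via the standard Luxemburg rescaling argument for Young functions satisfying \eqref{cond.intro}. Write $v(x) := (u(x) - u_0)/x^s$ and fix $\lambda > [u]_{s,G}$ arbitrary. By the definition of the Gagliardo seminorm one has $\Phi_{s,G}(u/\lambda) \leq 1$. Applying Theorem \ref{hardy} to the scaled function $u/\lambda$ (which inherits all hypotheses, with limit $u_0/\lambda$) gives
$$
\Phi_G(v/\lambda) = \int_0^\infty G\!\left(\frac{|u(x) - u_0|}{\lambda\, x^s}\right)\,dx \leq \C_H\, \Phi_{s,G}(u/\lambda) \leq \C_H.
$$

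The next step converts this modular bound into a Luxemburg-norm bound by a further rescaling. The growth condition \eqref{cond.intro}, integrated against $dt/t$, yields the uniform scaling inequality $G(\tau t) \leq \psi_G(\tau)\, G(t)$ for all $\tau, t > 0$, with $\psi_G$ as in \eqref{xxx}. Applied pointwise with $\tau = 1/\mu$ and integrated over $\R_+$ this gives
$$
\Phi_G\!\left(\frac{v}{\mu \lambda}\right) \leq \psi_G(1/\mu)\, \Phi_G(v/\lambda) \leq \psi_G(1/\mu)\, \C_H.
$$
The right-hand side is at most $1$ whenever $\mu \geq 1/\psi_G^{-1}(1/\C_H)$; splitting into the cases $\C_H \geq 1$ and $\C_H < 1$ and comparing with the piecewise definition of $\phi_G$ in \eqref{xxx}, one checks that this threshold is exactly $\phi_G(\C_H)$, so the choice $\mu = \phi_G(\C_H)$ is admissible.

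By the definition of the Luxemburg norm it then follows that $\|v\|_G \leq \mu \lambda = \phi_G(\C_H)\, \lambda$, and letting $\lambda \to [u]_{s,G}^+$ gives the claim (the degenerate case $[u]_{s,G} = 0$ forces $u$ to be constant, hence $u \equiv u_0$, so both sides vanish). No analytic obstacle is expected, since all substantive work already sits inside Theorem \ref{hardy}; the only slightly fiddly point is the bookkeeping identity $1/\psi_G^{-1}(1/\C_H) = \phi_G(\C_H)$, which has to be checked separately in the two regimes.
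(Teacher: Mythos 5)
Your argument is correct and follows essentially the same route as the paper: both deduce the norm bound from the modular inequality of Theorem \ref{hardy} by combining the scaling property \eqref{G1} with the definition of the Luxemburg norm, arriving at the same constant $\phi_G(\C_H)=\max\{\C_H^{1/p^-},\C_H^{1/p^+}\}$. The only difference is cosmetic: the paper absorbs $\C_H$ into the argument of $\Phi_{s,G}$ on the right, while you absorb it into the argument of $\Phi_G$ on the left via $\psi_G$; your bookkeeping identity and the treatment of the degenerate case $[u]_{s,G}=0$ both check out.
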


From Theorem \ref{hardy} and Remark \ref{remark.111}  it can be easily deduced the following consequence.
\begin{cor}  \label{corol}
Let $G$ be a Young function satisfying \eqref{cond.intro} and let  $s\in (0,1)$ be such that $sp^->1$. Then  for all $u\in W^{s,G}(\R_+)$ such that $u(0)=0$ it holds that
$$
\int_0^\infty G\left( \frac{ |u(x)| }{x^s} \right)\,dx \leq \C_H \Phi_{s,G}(u), \qquad  
\left\| \frac{u}{x^s}\right\|_G  \leq \phi_G(\C_H) [u]_{s,G},
$$
where $\phi_G$ is given in \eqref{xxx} and $\C_H$ is the constant given in Theorem \ref{hardy}.
\end{cor}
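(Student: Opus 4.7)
The plan is to show that Corollary \ref{corol} follows from Theorem \ref{hardy} and Corollary \ref{hardy.cor} by verifying that the boundary hypothesis in those statements reduces to $u_0 = 0$ under the assumption $u(0)=0$, which in turn requires justifying that $u(0)$ makes pointwise sense.

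First I would invoke Remark \ref{remark.111}: since $sp^->1$, any $u\in W^{s,G}(\R_+)$ has a continuous representative on a neighborhood $\mathcal{O}$ of the origin, via the embedding $W^{s,G}(\mathcal{O})\hookrightarrow C^{0,s-1/p^-}(\mathcal{O})$ from Proposition \ref{continua}. Thus the condition $u(0)=0$ is meaningful and, more importantly, $u$ is continuous at $0$ with value zero.

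Next I would check that the Cesàro-type limit hypothesis of Theorem \ref{hardy} is satisfied with $u_0 = 0$. This is an elementary consequence of continuity: given $\varepsilon>0$, pick $\delta>0$ such that $|u(t)|<\varepsilon$ for $0<t<\delta$; then for $0<x<\delta$,
$$
\left|\frac{1}{x}\int_0^x u(t)\,dt\right| \leq \frac{1}{x}\int_0^x |u(t)|\,dt < \varepsilon,
$$
so $\lim_{x\to 0}\frac{1}{x}\int_0^x u(t)\,dt = 0 = u_0$.

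Finally, applying Theorem \ref{hardy} with $u_0=0$ yields the modular inequality in Corollary \ref{corol}, while the norm inequality follows directly from Corollary \ref{hardy.cor} with $u_0=0$. There is no substantial obstacle here: the content of the corollary is purely a specialization of the already-proven statements, and the only point to be careful about is to explicitly connect pointwise vanishing at $0$ with the averaging limit, which is immediate from continuity.
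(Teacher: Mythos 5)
Your proposal is correct and follows exactly the route the paper intends (the paper itself only remarks that the corollary "can be easily deduced" from Theorem \ref{hardy} and Remark \ref{remark.111}): you use the H\"older continuity near the origin to identify the Ces\`aro limit $\lim_{x\to 0}\frac1x\int_0^x u(t)\,dt$ with $u(0)=0$, and then specialize Theorem \ref{hardy} and Corollary \ref{hardy.cor} to $u_0=0$. Your explicit verification that continuity at $0$ forces the averaging limit to equal $u(0)$ is precisely the detail the paper leaves implicit.
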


Although a norm inequality can be  deduced from Theorem \ref{hardy}, it can also be obtained independently of the modular inequality. In fact, the following result   provides for a more accurate constant.
\begin{thm}\label{hardy.2}
Let $G$ be a Young function satisfying \eqref{cond.intro} and let  $s\in (0,1)$ be such that $sp^->1$. Then 
$$
\left\|\frac{u}{x^s} \right\|_G \leq \frac{(1+s)p^--1}{sp^--1} [u]_{s,G}
$$
for all $u\in W^{s,G}(\R_+)$.
\end{thm}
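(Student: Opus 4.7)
The plan is to use a simple uniform-average decomposition of $u(x)/x^s$ into a Gagliardo-type piece and a Hardy-type piece, bound the first via Jensen's inequality and the second via Minkowski's integral inequality for the Luxemburg norm, and finally absorb to extract the precise constant. Starting from the trivial identity $u(x) = \tfrac{1}{x}\int_0^x u(x)\,dy = \tfrac{1}{x}\int_0^x(u(x) - u(y))\,dy + \tfrac{1}{x}\int_0^x u(y)\,dy$ and dividing by $x^s$ gives
\[
\frac{u(x)}{x^s} \;=\; \underbrace{\frac{1}{x^{s+1}}\int_0^x (u(x) - u(y))\,dy}_{=:\,T_1(x)} \;+\; \underbrace{\frac{1}{x^{s+1}}\int_0^x u(y)\,dy}_{=:\,T_2(x)}.
\]

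For $T_1$, since $|x - y| \leq x$ for $y \in (0, x)$, one has $|T_1(x)| \leq \tfrac{1}{x}\int_0^x \tfrac{|u(x) - u(y)|}{|x - y|^s}\,dy$. Applying Jensen's inequality to the convex $G$ against the probability weight $dy/x$ on $(0, x)$, then using Fubini together with the pointwise bound $\tfrac{1}{x} \leq \tfrac{1}{|x - y|}$, yields $\Phi_G(T_1/\mu) \leq \tfrac{1}{2}\Phi_{s,G}(u/\mu)$, which gives $\|T_1\|_G \leq [u]_{s,G}$.

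For $T_2$, substitute $y = xt$ and write $V(y) := u(y)/y^s$, so that $T_2(x) = \int_0^1 V(xt)\, t^s\,dt$. Minkowski's integral inequality in the Luxemburg norm gives
\[
\|T_2\|_G \leq \int_0^1 t^s\, \|V(\cdot\, t)\|_G\, dt.
\]
A change of variable in the inner norm shows $\Phi_G(V(\cdot\, t)/\lambda) = \tfrac{1}{t}\Phi_G(V/\lambda)$, and then the estimate $G(cw) \leq c^{p^-} G(w)$ for $c \leq 1$ coming from \eqref{cond.intro} implies $\|V(\cdot\, t)\|_G \leq t^{-1/p^-}\|V\|_G$. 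Therefore
\[
\|T_2\|_G \leq \|V\|_G \int_0^1 t^{s - 1/p^-}\,dt \;=\; \frac{p^-}{(s+1)p^- - 1}\,\|u/x^s\|_G,
\]
where the integral is finite because $sp^- > 1 > 1 - p^-$; moreover the stronger assumption $sp^- > 1$ is precisely what forces $\tfrac{p^-}{(s+1)p^- - 1} < 1$, since $(s+1)p^- - 1 - p^- = sp^- - 1 > 0$.

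Combining $\|u/x^s\|_G \leq \|T_1\|_G + \|T_2\|_G$ with the two previous estimates and absorbing the $\|u/x^s\|_G$ term on the right produces
\[
\left(1 - \frac{p^-}{(s+1)p^- - 1}\right)\|u/x^s\|_G \;\leq\; [u]_{s,G},
\]
and the identity $1 - \tfrac{p^-}{(s+1)p^- - 1} = \tfrac{sp^- - 1}{(s+1)p^- - 1}$ gives the claimed inequality. The delicate step is Step~3: obtaining the scaling $\|V(\cdot\, t)\|_G \leq t^{-1/p^-}\|V\|_G$ with the correct exponent $1/p^-$ (rather than some cruder value like $1/p^+$) requires using \eqref{cond.intro} in the sharp form $G(cw) \leq c^{p^-} G(w)$ for $c \leq 1$, and it is exactly this $p^-$ that feeds through to yield the constant $\tfrac{(1+s)p^- - 1}{sp^- - 1}$.
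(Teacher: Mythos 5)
Your proof is correct and follows the same overall architecture as the paper's: the identical decomposition $\frac{u(x)}{x^s}=T_1(x)+T_2(x)$ with $T_1=\frac{1}{x^{s+1}}\int_0^x(u(x)-u(y))\,dy$ and $T_2=\frac{1}{x^{s+1}}\int_0^x u(y)\,dy$, the same Jensen--Fubini bound $\|T_1\|_G\le[u]_{s,G}$, and the same absorption step using $sp^->1$ to make $1-\frac{p^-}{(1+s)p^--1}=\frac{sp^--1}{(1+s)p^--1}$ positive. The one genuine difference is your treatment of $T_2$: the paper invokes Corollary \ref{cor.palmieri}(ii), i.e.\ the weighted Orlicz--Hardy norm inequality of Palmieri and Maligranda \cite{Pal,Mal2}, as a black box, whereas you reprove exactly that estimate from scratch by writing $T_2(x)=\int_0^1 V(xt)\,t^s\,dt$ with $V=u/x^s$, applying Minkowski's integral inequality for the Luxemburg norm, and controlling the dilation via $\Phi_G(V(\cdot\,t)/\lambda)=\frac1t\Phi_G(V/\lambda)$ together with $G(cw)\le c^{p^-}G(w)$ for $c\le1$ (a consequence of \eqref{G1}), which correctly yields $\|V(\cdot\,t)\|_G\le t^{-1/p^-}\|V\|_G$ and hence the same constant $\frac{p^-}{(1+s)p^--1}$. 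Your version is more self-contained (it is essentially a proof of the special case of Proposition \ref{palmieri} that is actually needed), at the cost of relying on the integral Minkowski inequality for Banach function norms; the paper's version is shorter but outsources the sharp constant to the cited literature. One small caveat, shared equally by the paper's argument: the absorption step implicitly assumes $\|u/x^s\|_G<\infty$ a priori (otherwise one cannot subtract it from both sides), which in practice is handled by first working on $(0,\ell)$ with $\ell<\infty$ and letting $\ell\to\infty$, or by a truncation; neither you nor the paper makes this explicit, so it is not a defect of your proposal relative to the source.
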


Sharpness of the Hardy constant is known both in the local and nonlocal case when $G$ is a power, see \cite{FS, Her} for instance. However, in the Orlicz setting it is  unknown even in the local case.


\section{Preliminary results} \label{sec2}
\subsection{Young functions}
A \emph{Young function} is an application $G\colon\R_+\to \R_+$ which is  continuous, nonnegative, strictly increasing, convex on $[0,\infty)$ and admits the integral formulation $G(t)=\int_0^t g(s)\,ds$. For some fixed constants $p^\pm$ we assume  that $G$ satisfies the growth condition \eqref{cond.intro}.

The \emph{complementary Young function} $\tilde G$ of a Young function $G$ is defined as
$$
\tilde G(t)=\sup\{tw-G(w)\colon w>0\}.
$$

We introduce some well-known results on Young function. See \cite{KJF,RR} for details.

\begin{lema} \label{lema.cotas}
Let $G$ be a Young function satisfying \eqref{cond.intro} and $a,t\geq 0$. Then 
\begin{align*}
  &\min\{ a^{p^-}, a^{p^+}\} G(t) \leq G(at)\leq   \max\{a^{p^-},a^{p^+}\} G(t),\tag{$G_1$}\label{G1}\\
  &G(a+t)\leq \C (G(a)+G(t)) \quad \text{with } \C:=  2^{p^+}.\tag{$G_2$}\label{G2}
 \end{align*}
\end{lema}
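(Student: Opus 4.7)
The plan for $(G_1)$ is to reformulate the growth condition \eqref{cond.intro} as a two-sided bound on the logarithmic derivative of $G$ under an exponential substitution, and then integrate. Specifically, I would set $\varphi(\sigma) := \log G(e^\sigma)$ for $\sigma \in \R$, which is well-defined since $G(t) > 0$ for $t > 0$ (strict monotonicity with $G(0)=0$). Because $G$ is convex it is locally absolutely continuous, so $\varphi$ is too, and one has $\varphi'(\sigma) = e^\sigma g(e^\sigma)/G(e^\sigma)$ almost everywhere. Condition \eqref{cond.intro} then translates into $p^- \leq \varphi'(\sigma) \leq p^+$ a.e.

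Fixing $t > 0$ and writing $a = e^\alpha$, $t = e^\tau$, I would compute
$$
\log\frac{G(at)}{G(t)} \;=\; \varphi(\alpha+\tau) - \varphi(\tau) \;=\; \int_\tau^{\alpha+\tau} \varphi'(\sigma)\,d\sigma.
$$
When $a \geq 1$ (so $\alpha \geq 0$) this integral lies in $[p^-\alpha,\, p^+\alpha]$, and when $0 < a < 1$ the inequalities reverse to give an integral in $[p^+\alpha,\, p^-\alpha]$. Exponentiating and combining the two cases produces exactly
$$
\min\{a^{p^-}, a^{p^+}\}\, G(t) \;\leq\; G(at) \;\leq\; \max\{a^{p^-}, a^{p^+}\}\, G(t),
$$
which is $(G_1)$. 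The degenerate cases $a = 0$ or $t = 0$ follow from $G(0)=0$ and continuity.

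For $(G_2)$, I would derive it as a one-line consequence of $(G_1)$ and monotonicity. By symmetry I may assume $t \geq a$, so $a + t \leq 2t$; applying $(G_1)$ with the factor $2 \geq 1$ yields
$$
G(a+t) \;\leq\; G(2t) \;\leq\; 2^{p^+}\, G(t) \;\leq\; 2^{p^+}\bigl(G(a) + G(t)\bigr),
$$
giving the stated doubling constant $\C = 2^{p^+}$. The only conceptual point requiring care is the a.e.\ differentiability of $\varphi$, which is standard for convex functions; beyond that, the proof is essentially just an integration of \eqref{cond.intro} after a logarithmic change of variables, and I do not anticipate any substantive obstacle.
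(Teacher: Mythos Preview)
Your argument is correct. For $(G_1)$, the logarithmic substitution $\varphi(\sigma)=\log G(e^\sigma)$ is the standard and efficient way to integrate the two-sided bound on $tg(t)/G(t)$, and your treatment of the two cases $a\ge 1$ and $0<a<1$ is clean. For $(G_2)$, the one-line derivation from monotonicity and $(G_1)$ is exactly right and yields the stated constant $\C=2^{p^+}$.

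There is nothing to compare against: the paper does not supply its own proof of this lemma, stating instead that these are well-known properties of Young functions and referring the reader to \cite{KJF,RR}. Your write-up would serve perfectly well as the omitted details.
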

Condition \eqref{G2} is known as the \emph{$\Delta_2$ condition} or \emph{doubling condition}.
It can be proved that \eqref{cond.intro} implies that both $G$ and $\tilde G$ satisfy \eqref{G2}. See \cite[Theorem 3.4.4 and Theorem 3.13.9]{KJF}.

Young functions includes for instance  powers (when $g(t)=t^{p-1}$, $p^\pm=p>1$ and hence $G(t)=\tfrac{t^p}{p}$) and logarithmic perturbations of powers (when $g(t)=t\log(b+ct)$, where $p^-=1+a$, $p^+=2+a$). See \cite{KJF} for more  examples.

\subsection{Fractional Orlicz-Sobolev spaces}
Given a Young function $G$, a fractional parameter $s\in(0,1)$ and an open interval $\Omega\subseteq \R$,  we have already defined  the fractional Orlicz-Sobolev space $W^{s,G}(\Omega)$ in the introduction. We also define the following related space
$$
W^{s,G}_0(\Omega) := \{u\in W^{s,G}(\R) \colon u=0 \text{ a.e. in } \R\setminus \Omega \},
$$
which  coincides with the closure of $C^\infty_c$ functions with respect to the $\|\cdot\|_{s,G}$ norm, and it is the natural space for the well-posedness of Dirichlet problems.

 For a further generalization of these spaces we refer to   \cite{DNFBS}.

\medskip

The following result characterizes continuous functions in fractional Orlicz-Sobolev spaces.

\begin{prop} \label{continua}
Let $G$ be a Young function  satisfying \eqref{cond.intro} and let  $s\in(0,1)$  be such that $sp^->1$. Then, given an open and  bounded interval $\Omega\subset \R$, it holds that $W^{s,G}(\Omega)\subset C^{0,s-1/p^-}(\Omega)$.
\end{prop}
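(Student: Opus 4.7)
The plan is to mirror the classical Campanato proof of the fractional Sobolev Morrey embedding $W^{s,p}(\Omega)\hookrightarrow C^{0,s-1/p}(\Omega)$, replacing the H\"older step by a Jensen-type argument tailored to the Young function $G$ and its lower growth exponent $p^-$.

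The core step is a Campanato-type oscillation estimate. For intervals $E,F\subset\Omega$ with $|E|,|F|\asymp r$ and $\operatorname{diam}(E\cup F)\le cr$, I expect to prove
$$
|u_E-u_F| \le C\,[u]_{s,G}\,r^{s-1/p^-},\qquad u_E:=\frac{1}{|E|}\int_E u(y)\,dy.
$$
Starting from the identity $u_E-u_F=\frac{1}{|E||F|}\iint_{E\times F}(u(y)-u(z))\,dy\,dz$, Jensen's inequality applied to $G$ with $\lambda=\Lambda r^s$ and $\Lambda:=[u]_{s,G}$ yields
$$
G\!\left(\frac{|u_E-u_F|}{\Lambda r^s}\right) \le \frac{1}{|E||F|}\iint_{E\times F} G\!\left(\frac{|u(y)-u(z)|}{\Lambda r^s}\right)dy\,dz.
$$
Because $|y-z|\le cr$ on $E\times F$, monotonicity of $G$ gives $G(|u(y)-u(z)|/(\Lambda r^s))\le G(|u(y)-u(z)|/(\Lambda|y-z|^s))$, and the bound $1\le cr/|y-z|$ allows one to reinsert the weight $|y-z|^{-1}$ at the price of a multiplicative factor $cr$, so that
$$
G\!\left(\frac{|u_E-u_F|}{\Lambda r^s}\right) \le \frac{cr}{|E||F|}\,\Phi_{s,G}(u/\Lambda) \le \frac{C}{r},
$$
using $|E||F|\asymp r^2$ and the definition of the Luxemburg norm. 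Finally, \eqref{cond.intro} implies $G^{-1}(t)\le C t^{1/p^-}$ for $t\ge G(1)$, and applying $G^{-1}$ to both sides produces the announced oscillation bound.

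With this in hand the proof concludes in the usual way. For a Lebesgue point $x\in\Omega$, the Campanato estimate applied to the concentric dyadic intervals $I_k:=(x-r2^{-k},x+r2^{-k})$ and telescoping --- the geometric series $\sum_k 2^{-k(s-1/p^-)}$ converges exactly because $sp^->1$ --- gives $|u(x)-u_{I_0(x)}|\le C\,[u]_{s,G}\,r^{s-1/p^-}$. For two points $x,y\in\Omega$ at distance $r$, a triangle inequality through $u_{I_r(x)}$ and $u_{I_r(y)}$, with the intermediate jump controlled by the Campanato estimate applied to $I_r(x),I_r(y)$ (whose union has diameter $\le 3r$), produces $|u(x)-u(y)|\le C\,[u]_{s,G}\,|x-y|^{s-1/p^-}$ for a.e.\ $x,y\in\Omega$, so $u$ admits a representative in $C^{0,s-1/p^-}(\Omega)$. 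The main obstacle is the Campanato step: unlike the $L^p$ setting where H\"older's inequality with dual exponent $p'$ directly extracts the $r^{s-1/p}$ scaling, here one must perform the Jensen bookkeeping and invert $G$ at the end, with the exponent $1/p^-$ emerging precisely from the lower growth bound in \eqref{cond.intro}.
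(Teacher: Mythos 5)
Your argument is correct, but it takes a genuinely different route from the paper. The paper's proof is a two-line reduction: by \cite[Propositions 2.7 and 2.9]{FBPLS} the lower growth bound in \eqref{cond.intro} yields the continuous embedding of $W^{s,G}(\Omega)$ into the classical space $W^{s,p^-}(\Omega)$ on a bounded interval, and then the standard fractional Morrey embedding \cite[Theorem 8.2]{hitch} gives $W^{s,p^-}(\Omega)\subset C^{0,s-1/p^-}(\Omega)$. You instead inline the Campanato machinery behind that classical theorem and run it directly on the Orlicz modular: Jensen's inequality replaces H\"older's inequality in the oscillation estimate, and the inversion $G^{-1}(t)\le C t^{1/p^-}$ for $t\ge G(1)$ (which is exactly \eqref{G1} at $t=1$) extracts the exponent $s-1/p^-$. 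Your version is self-contained, avoids the intermediate space $W^{s,p^-}$, and makes explicit both the constants and the mechanism by which the H\"older exponent arises from $p^-$; the paper's version is much shorter but leans on two external results and is opaque about constants. Two small bookkeeping points in your write-up, neither of which affects validity: the monotonicity step $G\bigl(|u(y)-u(z)|/(\Lambda r^s)\bigr)\le G\bigl(|u(y)-u(z)|/(\Lambda |y-z|^s)\bigr)$ requires $|y-z|\le r$, so for $\operatorname{diam}(E\cup F)\le cr$ with $c>1$ you must absorb the factor $c^{s}$ via \eqref{G1}; and for large radii, where $C/r<G(1)$, the bound $G^{-1}(t)\le Ct^{1/p^-}$ holds only with a constant depending on $\operatorname{diam}(\Omega)$, which is harmless since $\Omega$ is bounded (and the resulting $\Omega$-dependence is also present in the paper's cited embedding).
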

\begin{proof}
From \cite[Proposition 2.9 and Proposition 2.7]{FBPLS},  for any $u\in W^{s,G}(\Omega)$ we get
$$
[u]_{W_{reg}^{s,p^-}(\Omega)} + \|u\|_{L^{p^-}(\Omega)} \leq C ([u]_{W^{s,G}(\Omega)} + \|u\|_{L^{G}(\Omega)}).
$$
where
$$
[u]_{W_{reg}^{s,p^-}(\Omega)}=\left(\iint_{\Omega\times\Omega} \frac{|u(x)-u(y)|^{p^-}}{|x-y|^{n+sp^-}} \,dxdy \right)^\frac1p.
$$
Moreover, by \cite[Theorem 8.2]{hitch}, $\|u\|_{C^{0,s-1/p^-}(\Omega)} \leq C([u]_{W_{reg}^{s,p^-}(\Omega)} + \|u\|_{L^{p^-}(\Omega)})
$ 
and the result follows.
\end{proof}

\section{One-dimensional Hardy inequalities} \label{sec3}
In this section we prove our main results. First, following \cite{BK}, we link norm inequalities for linear operators with integral inequalities.

\begin{prop} \label{equiv}
Let $G$ be a Young function satisfying \eqref{cond.intro}. Suppose that the inequality $\|Tu\|_{\ve G } \leq C \|u\|_{\ve G }
$
holds for all $\ve>0$ with $C$ independent of $\ve$, where
$
\|u\|_{\ve G} =\inf \left\{ \lam\colon \int_0^\infty G\left( \frac{|u(x)|}{\lam}\right)\ve\, dx \leq 1\right\}
$
and $T$ is a linear operator. Then it holds that
$$
\int_0^\infty G(|Tu(x)|)\,dx \leq \int_0^\infty G(C|u(x)|)\,dx.
$$
\end{prop}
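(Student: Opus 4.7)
The plan is to exploit the free parameter $\varepsilon>0$ appearing in the weighted Luxemburg norm $\|\cdot\|_{\varepsilon G}$ by tuning it to the given function $u$. This is the standard Bloom--Kerman normalization trick: choosing $\varepsilon$ so that a convenient modular integral equals $1$ lets us convert the assumed norm inequality into a statement about modulars.

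The key mechanical fact is that, for any measurable $v$ with $0<\int_0^\infty G(|v(x)|)\,dx<\infty$, setting
$$
\varepsilon_v := \left(\int_0^\infty G(|v(x)|)\,dx\right)^{-1}
$$
makes $\lambda=1$ admissible in the infimum defining $\|v\|_{\varepsilon_v G}$, so $\|v\|_{\varepsilon_v G}\leq 1$. Given $u$, I first dispose of the trivial cases $\int_0^\infty G(C|u(x)|)\,dx\in\{0,\infty\}$: the conclusion is immediate (when the integral is zero we have $u\equiv 0$; when it is infinite the right-hand side of the target already bounds the left-hand side vacuously).

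In the nontrivial case, I would apply the observation above to $v:=Cu$, setting $\varepsilon=\varepsilon_{Cu}$, and plug $Cu$ into the hypothesis to obtain
$$
\|T(Cu)\|_{\varepsilon G}\leq C\,\|Cu\|_{\varepsilon G}\leq C.
$$
By linearity $T(Cu)=C\,Tu$, and the bound $\|T(Cu)\|_{\varepsilon G}\leq C$ is equivalent, by monotone convergence in the definition of the infimum (using that $G$ is continuous and increasing), to
$$
\int_0^\infty G\!\left(\tfrac{|T(Cu)(x)|}{C}\right)\varepsilon\,dx\leq 1,
$$
that is, $\int_0^\infty G(|Tu(x)|)\varepsilon\,dx\leq 1$. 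Multiplying through by $1/\varepsilon=\int_0^\infty G(C|u(x)|)\,dx$ yields the desired modular inequality.

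The argument presents no genuine obstacle; the only points worth isolating are the admissibility of $\lambda=1$ in the infimum (automatic from the choice of $\varepsilon$) and the passage from the norm bound to the modular bound at the exact value $\lambda=C$ (which needs only continuity and monotonicity of $G$, with the $\Delta_2$ condition unused here). A minor subtlety is that the hypothesis must hold uniformly in $\varepsilon$ with the same constant $C$, which is precisely what is assumed, so one may freely evaluate it at the function-dependent $\varepsilon$ constructed above.
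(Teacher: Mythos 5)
Your proposal is correct and follows essentially the same route as the paper: choose $\varepsilon$ as the reciprocal of the modular of the input so that its $\varepsilon G$-Luxemburg norm is at most $1$, apply the uniform-in-$\varepsilon$ hypothesis, convert the resulting norm bound $\|T(\cdot)\|_{\varepsilon G}\leq C$ back into a modular bound, and use linearity via the substitution $u\mapsto Cu$ (which you perform at the outset rather than at the end, as the paper does). The only cosmetic difference is that you explicitly dispose of the degenerate cases where the modular of $Cu$ is $0$ or $\infty$, a point the paper leaves implicit.
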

\begin{proof}
Given $u \in L^G(\R_+)$, define the number $\ve=\left(\int_0^\infty  G(|u|)\,dx \right)^{-1}$
and observe that $\|u\|_{\ve G  }\leq 1$. Therefore, $
\|Tu\|_{\ve G} \leq C \|u\|_{\ve G  } \leq C$, 
and then, by definition of the Luxemburg norm we get
$$
\int_0^\infty G\left( \frac{|T u|}{C} \right)\,dx \leq \frac{1}{\ve} \int_0^\infty  G\left( \frac{|T u|}{\|Tu\|_{\ve G}} \right) \ve \,dx \leq \frac{1}{\ve} = \int_0^\infty  G(|u|)\,dx.
$$
Finally, since $T$ is linear, replacing $u$  with  $Cu$ the result follows.
\end{proof}

In order to apply Proposition \ref{equiv} we  use the following  inequality due to \cite{Pal} (cf. also \cite[Corollary 4]{Mal2}).

\begin{prop} \label{palmieri}
Given $I=(0,\ell)$, $0<\ell\leq \infty$, if $\theta\in\R$ is such that $\theta<1/(p^-)'$, then, for $x^\theta u(x) \in L^G(I)$, 
$$
\left\| x^{\theta-1} \int_0^x u(t)\,dt \right\|_{L^G(I)} \leq \frac{(p^-)'}{1-\theta (p^-)' } \left\| x^\theta u(x)\right\|_{L^G(I)}.
$$
where $(p^-)'$ is the conjugated exponent of $p^-$.
\end{prop}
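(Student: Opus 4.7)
The plan is to reduce the weighted Hardy estimate in $L^G(I)$ to a dilation-and-Minkowski argument that mimics the classical $L^p$ proof of Hardy's inequality, then exploit the lower growth bound $p^-$ in \eqref{cond.intro} to control Orlicz dilations. The key observation is that, unlike the $L^p$ case, $L^G$ is not scale invariant, so the power $s^{-1/p^-}$ (instead of an equality) will govern the behaviour of $\|v(s\cdot)\|_{L^G(I)}$, and this is exactly what will force the range $\theta<1/(p^-)'$.

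First I would set $v(y):=y^{\theta}u(y)$ and perform the change of variables $t=xs$ inside the inner integral, rewriting
$$
x^{\theta-1}\int_0^x u(t)\,dt \;=\; \int_0^1 s^{-\theta}\,v(xs)\,ds.
$$
Second, I would apply Minkowski's integral inequality for the Luxemburg norm, namely
$$
\left\|\int_0^1 s^{-\theta}\,v(xs)\,ds\right\|_{L^G(I)} \;\leq\; \int_0^1 s^{-\theta}\,\|v(xs)\|_{L^G(I)}\,ds,
$$
which is available because the doubling property of both $G$ and $\tilde G$ (implied by \eqref{cond.intro}) makes $L^G(I)$ a reflexive Banach function space. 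Third, I would estimate the dilation $\|v(xs)\|_{L^G(I)}$: the substitution $y=xs$ yields
$$
\int_0^\ell G\!\left(\frac{|v(xs)|}{\lambda}\right)dx = \frac{1}{s}\int_0^{\ell s} G\!\left(\frac{|v(y)|}{\lambda}\right)dy,
$$
and combining this with the bound $G(t/\lambda)\le\lambda^{-p^-}G(t)$ for $\lambda\ge 1$ from \eqref{G1} gives $\|v(xs)\|_{L^G(I)}\le s^{-1/p^-}\|v\|_{L^G(I)}$ for every $s\in(0,1]$. The inequality then reduces to computing $\int_0^1 s^{-\theta-1/p^-}\,ds$, which is finite precisely when $\theta<1-1/p^-=1/(p^-)'$; a short algebraic manipulation shows that the resulting constant $1/(1-\theta-1/p^-)$ coincides with the sharp constant $(p^-)'/(1-\theta(p^-)')$ claimed in the statement.

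The main obstacle is the third step. In the $L^p$ setting one has the clean identity $\|v(s\cdot)\|_p=s^{-1/p}\|v\|_p$; in Orlicz spaces only the one-sided estimate $\|v(s\cdot)\|_{L^G(I)}\le s^{-1/p^-}\|v\|_{L^G(I)}$ survives, and its proof is the place where the non-power nature of $G$ genuinely enters through \eqref{G1}. This is also the conceptual reason why the admissible range of $\theta$ depends on $p^-$ and not on $p^+$: the lower growth exponent is what governs the scaling inequality, and consequently dictates the integrability threshold of $s^{-\theta-1/p^-}$ near $s=0$. A secondary, more routine issue is the justification of Minkowski's integral inequality for the Luxemburg norm, which can be handled either by duality with $L^{\tilde G}(I)$ or by approximating the $s$-integral by finite Riemann sums and invoking the triangle inequality for $\|\cdot\|_{L^G(I)}$.
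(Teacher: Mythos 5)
Your argument is correct, and it fills a gap the paper deliberately leaves open: Proposition \ref{palmieri} is stated in the paper as a quoted result from \cite{Pal} (cf.\ \cite[Corollary 4]{Mal2}) with no proof given, so there is nothing internal to compare against. What you have written is essentially the standard dilation-operator proof of weighted Hardy inequalities in rearrangement-invariant spaces (the route underlying \cite{Mal2}), correctly adapted to the Luxemburg norm. The three ingredients all check out: the identity $x^{\theta-1}\int_0^x u(t)\,dt=\int_0^1 s^{-\theta}v(xs)\,ds$ with $v(y)=y^\theta u(y)$; Minkowski's integral inequality, which holds for any Banach function norm with the Fatou property (reflexivity is not actually needed); and the dilation bound, where for $s\le 1$ the substitution gives $\int_0^\ell G(|v(xs)|/\lambda)\,dx=\tfrac1s\int_0^{\ell s}G(|v(y)|/\lambda)\,dy$ and choosing $\lambda=s^{-1/p^-}\|v\|_{L^G(I)}$ together with $G(s^{1/p^-}t)\le (s^{1/p^-})^{p^-}G(t)=sG(t)$ from \eqref{G1} (the maximum in \eqref{G1} being $a^{p^-}$ for $a\le 1$) yields $\|v(s\cdot)\|_{L^G(I)}\le s^{-1/p^-}\|v\|_{L^G(I)}$. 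The final computation $\int_0^1 s^{-\theta-1/p^-}\,ds=(1-\theta-1/p^-)^{-1}=(p^-)'/(1-\theta(p^-)')$ reproduces exactly the stated constant and the threshold $\theta<1/(p^-)'$. One stylistic point: your intermediate phrasing ``$G(t/\lambda)\le\lambda^{-p^-}G(t)$ for $\lambda\ge1$'' is a slightly misplaced instance of \eqref{G1}; the scaling factor you actually need to push through $G$ is $s^{1/p^-}$, as in the computation above, but this is a presentational slip rather than a mathematical one.
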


\begin{cor} \label{cor.palmieri}
Given $I=(0,\ell)$, $0<\ell\leq \infty$ and $s\in(0,1)$,
\begin{enumerate} 
\item[(i)] if $\theta=1-s$ and $x^{1-s} u(x) \in L^G(I)$,  for $s p^->1$ it holds that
\begin{equation} \label{palm1}
\left\| \frac{1}{x^s} \int_0^x u(t)\,dt \right\|_{L^G(I)} \leq \frac{p^-}{sp^--1} \left\| x^{1-s} u(x)\right\|_{L^G(I)},
\end{equation}
\item[(ii)] if $\theta=-s$ and $x^{-s} u(x) \in L^G(I)$,  for $(1+s) p^->1$ it holds that
\begin{equation} \label{palm2}
\left\| \frac{1}{x^{1+s}} \int_0^x u(t)\,dt \right\|_{L^G(I)} \leq \frac{p^-}{(1+s)p^--1} \left\| x^{-s} u(x)\right\|_{L^G(I)}.
\end{equation}
\end{enumerate}
\end{cor}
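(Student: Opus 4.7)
The plan is to obtain both inequalities as direct specializations of Proposition \ref{palmieri}, checking in each case that the admissibility condition $\theta<1/(p^-)'$ translates exactly to the stated bound on $sp^-$, and then simplifying the constant $(p^-)'/(1-\theta(p^-)')$ using $(p^-)'=p^-/(p^--1)$.

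For part (i) I would set $\theta=1-s$. Since $1/(p^-)'=1-1/p^-$, the condition $\theta<1/(p^-)'$ becomes $1-s<1-1/p^-$, i.e.\ $sp^->1$, which is precisely the hypothesis. The exponent appearing on the left-hand side of Proposition \ref{palmieri} is $\theta-1=-s$, matching the factor $1/x^s$. A short algebraic simplification gives
\[
\frac{(p^-)'}{1-\theta(p^-)'}=\frac{p^-/(p^--1)}{1-(1-s)p^-/(p^--1)}=\frac{p^-}{(p^--1)-(1-s)p^-}=\frac{p^-}{sp^--1},
\]
yielding \eqref{palm1}.

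For part (ii) I would set $\theta=-s$. The condition $\theta<1/(p^-)'$ reads $-s<1-1/p^-$, i.e.\ $(1+s)p^->1$, matching the hypothesis. Here $\theta-1=-(1+s)$, so the left-hand factor becomes $1/x^{1+s}$, and the same simplification gives
\[
\frac{(p^-)'}{1-\theta(p^-)'}=\frac{p^-/(p^--1)}{1+sp^-/(p^--1)}=\frac{p^-}{(p^--1)+sp^-}=\frac{p^-}{(1+s)p^--1},
\]
which yields \eqml{palm2} — sorry, \eqref{palm2}.

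There is no real obstacle here: the whole content is a substitution plus an arithmetic simplification of the constant, with a routine verification that the admissibility condition on $\theta$ matches the stated threshold on $s p^-$. The only thing to be slightly careful about is the conversion between $(p^-)'$ and $p^-$; the rest is immediate.
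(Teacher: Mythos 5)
Your proposal is correct and is exactly the intended argument: the paper states Corollary \ref{cor.palmieri} as an immediate specialization of Proposition \ref{palmieri} with $\theta=1-s$ and $\theta=-s$, and your verification of the admissibility conditions and the simplification of the constant $\frac{(p^-)'}{1-\theta(p^-)'}$ in both cases is accurate.
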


We prove now a key lemma for our arguments.
\begin{lema} \label{lema.loca}
Let $G$ be a Young function satisfying \eqref{cond.intro} and let $s\in(0,1)$ be such that $sp^->1$. Given  $u$ such that $x^{-s} u(x) \in L^G(\R_+)$, then we have that
\begin{equation} \label{hardy.orlicz.local2}
\int_0^\infty G\left(\left| \frac{1}{x^{s}}\int_0^x \frac{u(t)}{t}\,dt \right|\right)\,dx \leq \c_H \int_0^\infty G\left(   \frac{|u(x)|}{x^{s}}\right)\,dx.
\end{equation}
The constant $\c_H$ is given by $\c_H=\psi_G\left( \frac{p^-}{sp^--1}\right)$, where $\psi_G$ is given in \eqref{xxx}.
\end{lema}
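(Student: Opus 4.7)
The plan is to derive \eqref{hardy.orlicz.local2} by converting the local $L^G$-norm Hardy inequality of Corollary \ref{cor.palmieri}(i) into a modular inequality via the norm-to-modular principle of Proposition \ref{equiv}.

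Concretely, I would proceed in the following steps. First, apply \eqref{palm1} on $I=\R_+$ with $u(t)$ replaced by $u(t)/t$: the weighted hypothesis then reads $x^{1-s}\cdot u(x)/x = u(x)/x^s\in L^G(\R_+)$, which is precisely the standing assumption, and the conclusion becomes
$$
\left\|\frac{1}{x^s}\int_0^x\frac{u(t)}{t}\,dt\right\|_{L^G(\R_+)} \leq C\left\|\frac{u}{x^s}\right\|_{L^G(\R_+)}, \qquad C:=\frac{p^-}{sp^--1}.
$$
Second, I would set $v(x):=u(x)/x^s$ and introduce the linear operator
$$
Tv(x):=\frac{1}{x^s}\int_0^x t^{s-1}v(t)\,dt,
$$
so that the estimate above reads $\|Tv\|_{L^G(\R_+)}\leq C\|v\|_{L^G(\R_+)}$. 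Third, I would observe that for every $\ve>0$ the scaled function $\ve G$ is itself a Young function satisfying \eqref{cond.intro} with the same exponents $p^\pm$ (the ratio $tg/G$ is invariant under positive rescalings of $G$), so Corollary \ref{cor.palmieri}(i) applied to $\ve G$ yields $\|Tv\|_{\ve G}\leq C\|v\|_{\ve G}$ with the same constant $C$, uniformly in $\ve$. Fourth, Proposition \ref{equiv} then produces the modular bound
$$
\int_0^\infty G(|Tv(x)|)\,dx \leq \int_0^\infty G(C|v(x)|)\,dx.
$$
Finally, since $s<1$ forces $C>1$, property \eqref{G1} gives $G(C|v|)\leq C^{p^+}G(|v|)=\psi_G(C)G(|v|)$; substituting back $v=u/x^s$ yields \eqref{hardy.orlicz.local2} with the announced constant $\c_H=\psi_G(C)=\psi_G\bigl(\tfrac{p^-}{sp^--1}\bigr)$.

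The main delicate point is the third step: ensuring that Palmieri's inequality transfers uniformly to the weighted modulars $\Phi_{\ve G}$ with a constant independent of $\ve$, which is exactly what the hypothesis of Proposition \ref{equiv} demands. This reduces to the fact that \eqref{cond.intro} is preserved under positive scalings of $G$, which is transparent from the definition; once this is in hand, the rest of the argument is essentially bookkeeping (a substitution of variables and one application of \eqref{G1}).
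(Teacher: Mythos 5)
Your proposal is correct and follows essentially the same route as the paper: apply Palmieri's norm inequality (Corollary \ref{cor.palmieri}(i)) to $u(t)/t$ for each scaled Young function $\ve G$, note the constant $\tfrac{p^-}{sp^--1}$ is independent of $\ve$, invoke Proposition \ref{equiv} to pass to the modular inequality, and finish with \eqref{G1}. Your explicit introduction of the linear operator $Tv(x)=x^{-s}\int_0^x t^{s-1}v(t)\,dt$ is a slightly more careful bookkeeping of the same argument.
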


\begin{proof}
Given  $G$ satisfying \eqref{cond.intro}, we define the Young function $G_\ve:=\ve G$ for some $\ve>0$. It is immediate  that $G_\ve$ satisfies \eqref{cond.intro} with the same constants that $G$. 

Let $u$ be a fixed function such that $x^{-s} u(x) \in L^G(\R_+)$, then also $x^{-s} u(x) \in L^{\ve G}(\R_+)$. Hence, by applying Corollary \ref{cor.palmieri}  item \eqref{palm1} to  $x^{-1}u(x)$ we find that
\begin{equation} \label{ec.norma}
\left\|  \frac{1}{x^{s}}\int_0^x \frac{u(t)}{t}\,dt \right\|_{G_\ve} \leq \frac{p^-}{sp^--1 }  \left\| \frac{u}{x^{s}} \right\|_{G_\ve} \qquad \forall \ve>0.
\end{equation}
Since \eqref{ec.norma}  holds with constant  independent of $\ve$, from Proposition \ref{equiv} we get that
$$
\int_0^\infty G\left( \left| \frac{1}{x^{s}}\int_0^x \frac{u(t)}{t}\,dt \right|\right)\,dx \leq \int_0^\infty G\left(\frac{p^-}{sp^--1 }   \frac{  |u(x)|}{x^{s}}\right)\,dx
$$
and the result follows by using  condition \eqref{G1}.
\end{proof}

 We are ready to prove our   main result.
\begin{proof}[Proof of Theorem \ref{hardy}]
We assume without loss of generality that 
$$
\lim_{x\to 0} \frac{1}{x}\int_0^x u(x)\,dx =0.
$$
Then, the general case will follow by applying Theorem \ref{hardy} to $u-u_0$.

Given $u\in W^{s,G}(\R_+)$ we consider the auxiliary function
$$
 v(x)= u(x)-\frac{1}{x}\int_0^x u(t)\,dt.
$$
For $0<a<b<\infty$, using integration by parts  it is straightforward to see that
\begin{align*}  
\begin{split}
	\int_{a}^{b} \frac{v(x)}{x}\,dx  &= \int_a^b \frac{u(x)}{x}\,dx-\int_a^b\frac{1}{x^2}\int_0^x u(s)\,ds\,dx
	\\
	 &=\int_a^b \frac{u(x)}{x}\,dx + \frac{1}{x}\int_0^x u(s)\,ds \Big|_a^b - \int_a^b \frac{u(x)}{x}\,dx\\
	&= \frac{1}{b}\int_0^b u(x)\,dx -  \frac{1}{a}\int_0^a u(x)\,dx.
\end{split}
\end{align*}
Taking $b=t$ and $a\to 0$ in the last expression we get
$
\int_{0}^{t} \frac{v(x)}{x}\,dx  =  \frac{1}{t}\int_0^t u(x)\,dx,
$
and
\begin{equation} \label{rel.uv}
u(x)=v(x)+\int_0^x \frac{v(t)}{t}\,dt.
\end{equation}
We prove now that $x^{-s} v(x)\in L^G(\R_+)$. Indeed, by definition of $v$ we have that
\begin{align*}
\int_0^\infty G\left( \frac{|v(x)|}{x^s}\right)    \,dx = \int_0^\infty G\left( \frac{  |u(x)-\frac{1}{x}\int_0^x u(t)\,dt| }{x^s} \right) \,dx.
\end{align*}
Now, by using Jensen's inequality we get
\begin{align*} 
\int_0^\infty G\left( \frac{ |u(x)-\frac{1}{x}\int_0^x u(t)\,dt| }{x^s} \right) \,dx &\leq 
\int_0^\infty G\left(  \frac{1}{x} \int_0^x \frac{|u(x)-u(t)|}{x^s} dt \right) \,dx\\
&\leq \int_0^\infty \frac{1}{x} \int_0^x G\left(    \frac{|u(x)-u(t)|}{x^s}  \right) dt \,dx,
\end{align*}
and the last term in the inequality above can be bounded as
\begin{align*}
\int_0^\infty \frac{1}{x} \int_0^x G\left(    \frac{|u(x)-u(t)|}{x^s}  \right) dt \,dx &\leq \int_0^\infty  \int_0^x G\left(    \frac{|u(x)-u(y)|}{|x-y|^s}  \right) \frac{dy\,dx}{|x-y|}\\
&\leq  \int_0^\infty  \int_0^\infty G\left(    \frac{|u(x)-u(y)|}{|x-y|^s}  \right) \frac{dy\,dx}{|x-y|}.
\end{align*}

Since $x^{-s} v(x)\in L^G(\R_+)$,  we are in position to apply Lemma \ref{lema.loca} to $v$, obtaining 
\begin{equation} \label{ec.interm}
\int_0^\infty G \left(\left| \frac{1}{x^s}   \int_0^x \frac{v(t)}{t}\,dt   \right| \right)  \,dx \leq \c_H \int_0^\infty G\left( \frac{ |v(x)| }{x^s} \right) \,dx.
\end{equation}

Therefore, from \eqref{rel.uv}, property \eqref{G2} and \eqref{ec.interm} we find that
\begin{align*}
\int_0^\infty G\left( \frac{ |u(x)| }{x^s} \right)\,dx &\leq  
\C\int_0^\infty G\left( \frac{ |v(x)| }{x^s} \right) \,dx +\C\int_0^\infty G \left(\frac{1}{x^s}   \left|\int_0^x \frac{v(t)}{t}\,dt  \right|  \right)  \,dx \\
&\leq \C(1+\c_H)\int_0^\infty G\left( \frac{ |v(x)| }{x^s} \right) \,dx\\
&\leq  \C(1+\c_H)\Phi_{s,G}(u)
\end{align*}
giving the desired inequality.
\end{proof}

As a corollary, we obtain the Hardy inequality for norms stated in  Corollary \ref{hardy.cor}.

\begin{proof}[Proof of Corollary \ref{hardy.cor}]
We assume without loss of generality that 
$$
\lim_{x\to 0} \frac{1}{x}\int_0^x u(x)\,dx =0.
$$
Then, the general case will follow by applying the inequality to $u-u_0$.

By using  Theorem \ref{hardy} together with \eqref{G1} one gets that $
\Phi_G(\tfrac{u}{x^s})\leq \Phi_{s,G}(\tilde \C_H u)$ for $u\in W^{s,G}(\R_+)$, where $\tilde \C_H = \max\{\C_H^\frac{1}{p^-}, \C_H^\frac{1}{p^+}\}$. Then, given $u\in W^{s,G}(\R_+)$, from the last expression  we  find that
$
\Phi_G\left(u/\tilde\C_H [u]_{s,G}  x^s\right) \leq \Phi_{s,G}\left(u/[u]_{s,G}\right) \leq 1.
$
Hence, by definition of the Luxemburg norm we get
$$
\left\| \frac{u}{x^s}\right\|_G=\inf\left\{ \lam : \Phi_G\left(\frac{u}{\lam x^s }\right)\leq 1 \right\} \leq \tilde \C_H [u]_{s,G}
$$
and the proof concludes.
\end{proof}

Now, we provide for the proof of the norm inequality given in Theorem \ref{hardy.2}.

\begin{proof}[Proof of Theorem \ref{hardy.2}]
Given $u\in W^{s,G}(\R_+)$, by using the triangular inequality for the Luxemburg norm we obtain  that
\begin{align*}
\left\|\frac{u}{x^s} \right\|_G &\leq \left\| \frac{u-\tfrac{1}{x}\int_0^x u(y)\,dy}{  x^s}\right\|_G + \left\|  \frac{1}{x^{1+s}}\int_0^x u(y)\,dy \right\|_G:= (i) + (ii).
\end{align*}
Let us find a bound for $(i)$. By using Jensen's inequality we get
\begin{align*} 
\int_0^\infty G\Big( &\frac{ \left|u(x)-\frac{1}{x}\int_0^x u(t)\,dt \right| }{x^s} \Big) \,dx \leq 
\int_0^\infty G\left(  \frac{1}{x} \int_0^x \frac{|u(x)-u(t)|}{x^s} dt \right) \,dx\\
&\leq \int_0^\infty \frac{1}{x} \int_0^x G\left(    \frac{|u(x)-u(t)|}{x^s}  \right) dt \,dx\leq \int_0^\infty  \int_0^x G\left(    \frac{|u(x)-u(y)|}{|x-y|^s}  \right) \frac{dy\,dx}{|x-y|}\\&\leq \Phi_{s,G}(u).
\end{align*}
The inequality above applied to $u/[u]_{s,G}$ gives that
$$
\int_0^\infty G\left( \frac{\left| u(x)-\frac{1}{x}\int_0^x u(y)\,dy\right|}{x^s [u]_{s,G}} \right) \,dx \leq \Phi_{s,G}\left(\frac{u}{[u]_{s,G}}\right)\leq 1,
$$
from where, from the definition of the Luxemburg norm, we obtain that $
(i)$ is bounded by $[u]_{s,G}$. Expression \eqref{palm2} from Corollary \ref{cor.palmieri}  gives  that $(ii)$ is less than $\frac{p^-}{(1+s) p^--1 } \left\| \frac{u}{x^s} \right\|_G$. From these two relations we get that
$$
\left\|\frac{u}{x^s} \right\|_G \leq 
 [u]_{s,G}+ \frac{p^-}{(1+s)p^--1} \left\| \frac{u}{x^s} \right\|_G.
$$
Finally, the condition $sp^->1$ leads to
$$
\C_H=\left(1- \frac{p^-}{(1+s) p^--1 } \right)^{-1}=\frac{(1+s)p^--1}{sp^--1}>0
$$
and the desired inequality is obtained.
\end{proof}

\section{Applications and examples} \label{sec5}
\subsection{Lower bound of Dirichlet eigenvalues}  
Given a Young function $G$ satisfying \eqref{cond.intro} and a fractional parameter $s\in(0,1)$ such that $sp^->1$, consider the    eigenvalue problem for the fractional $g-$Laplacian operator in an open and bounded interval $\Omega\subset \R^+$ (see \cite{FBS,S}):
\begin{align} \label{ec.peso.2}
\begin{cases}
(-\Delta_g)^s u= \lam g(u) u &\quad \text{ in } \Omega,\\
u=0 &\quad \text{ on } \R  \setminus \Omega.
\end{cases}
\end{align}
This operator  is well defined between $W^{s,G}(\R)$ and its dual, and acts as
$$
(-\Delta_g)^s u(x):=2p.v. \int_{-\infty}^\infty g(|D_su|)\frac{D_s u}{|D_s u|} \frac{dy}{|x-y|^{1+s}},
$$
where $D_s u(x,y)=\frac{u(x)-u(y)}{|x-y|^s}$. The following representation formula holds
$$
\langle (-\Delta_g)^s u,v\rangle= \int_{-\infty}^\infty\int_{-\infty}^\infty g(|D_s u|)\frac{D_s u}{|D_s u|} D_s v\frac{dxdy}{|x-y|} \qquad \forall v\in W^{s,G}(\R).
$$
The natural space for solutions of \eqref{ec.peso.2} is  $W^{s,G}_0(\Omega)$, i.e.,  functions in $W^{s,G}(\R)$ such that $u=0$ in $\R\setminus \Omega$.  

Due to the possible lack of homogeneity of the problem, eigenpairs depend strongly on the normalization: for each $\alpha>0$,   $u_\alpha \in \mathcal{X}_\alpha :=\left\{ W^{s,G}_0(\Omega) \colon \Phi_G(u)=\alpha \right\}$ is an eigenfunction of \eqref{ec.peso.2} with eigenvalue $\lam_\alpha$ if it holds that
\begin{equation*}  
\langle (-\Delta_g)^s u_\alpha,v \rangle = \lam_\alpha \int_\Omega   g\left(u_\alpha \right)u_\alpha\frac{u_{\alpha}}{|u_{\alpha}|}v\,dx \quad \forall v\in W^{s,G}_0(\Omega).
\end{equation*}
On the other hand, for each $\alpha>0$ we define the minimizer  (see \cite[Theorem 2.12]{FBPLS}) 
\begin{equation} \label{alpha.2}
\Lambda_\alpha:=\inf \{   \Phi_{s,G}(u)/\Phi_G(u) \colon u\in \mathcal{X}_\alpha \}.
\end{equation}

In  \cite{S} it is proved that for each $\alpha>0$ the infimum in \eqref{alpha.2} is attained for  some  function $u_\alpha  \in \mathcal{X}_\alpha$, and then, by a Lagrange's multiplier argument it is deduced the existence of an eigenvalue $\lam_\alpha$  of \eqref{ec.peso.2} with  eigenfunction  $u_\alpha$. However, in contrast with the case of powers,  in general  $\lam_\alpha$ does not admit a variational characterization and $\lam_\alpha \neq \Lambda_\alpha$. Both constants are comparable each other. Indeed, 
$
\frac{p^-}{p^+}\Lambda_\alpha\leq \lam_\alpha \leq \frac{p^+}{p^-}\Lambda_\alpha.
$

Using Lemma \ref{lema.cotas} it is easy to see that for any function $u\in \mathcal{X}_\alpha$ it holds that
$$
\int_{\Omega} G(|u|)\,dx \leq \int_\Omega G\left(\frac{|u(x)|}{x^s} \text{diam}(\Omega) \right)\,dx \leq \psi_G(\text{diam}(\Omega)) \int_\Omega G\left( \frac{|u(x)|}{x^s} \right)\,dx,
$$
where $\psi_G$ is given in \eqref{xxx}. Hence, from Theorems \ref{hardy}  we get that
$$
(\psi_G(\text{diam}(\Omega))\C_H)^{-1}\leq \Lambda_\alpha \leq \frac{p^+}{p^-} \lam_\alpha.
$$

\subsection{Lower bound of weighted eigenvalues}  
With the same ideas of \cite{S}, we can consider eigenvalues $\lam_\alpha$ and minimizers $\Lambda_\alpha$  of the weighted problem
\begin{align} \label{ec.peso}
\begin{cases}
(-\Delta_g)^s u= \tfrac{\lambda}{|x|^s} g\big(\tfrac{|u|}{x^s}\big) \frac{u}{|u|} &\quad \text{ in } \Omega,\\
u=0 &\quad \text{ on } \R  \setminus \Omega.
\end{cases}
\end{align}
where $\Omega\subset \R^+$ is an open and bounded interval. Given $\alpha>0$, a number $\lam_\alpha$ is an eigenvalue of \eqref{ec.peso} with eigenfunction   $u_\alpha\in \mathcal{X}_\alpha :=\left\{ W^{s,G}_0(\Omega) \colon \int_\Omega G\left(\frac{|u(x)|}{x^s}\right)\,dx=\alpha \right\}$ if it holds that
\begin{equation*}  
\langle (-\Delta_g)^s u_\alpha,v \rangle = \lam_\alpha \int_\Omega   g\left(\frac{|u_\alpha(x)|}{|x|^s}\right)\frac{u_\alpha(x)}{|u_\alpha(x)|}\frac{v(x)}{x^s}\,dx \quad \forall v\in W^{s,G}_0(\Omega).
\end{equation*}
We also define the number $\Lambda_\alpha:=\inf \left\{ \Phi_{s,G}(u)/\int_\Omega G\left(\frac{|u(x)|}{x^s}\right)\,dx \colon u\in \mathcal{X}_\alpha\right\}$.

We  have again in this case that $\frac{p^-}{p^+}\Lambda_\alpha\leq \lam_\alpha \leq \frac{p^+}{p^-}\Lambda_\alpha$. As a direct consequence of Theorem \ref{hardy}  we get   $\frac{1}{\C_H} \leq \Lambda_\alpha \leq \frac{p^+}{p^-} \lam_\alpha$.

\section*{Acknowledgements}
This paper is partially supported by grants UBACyT 20020130100283BA, CONICET PIP 11220150100032CO and ANPCyT PICT 2012-0153. 

\end{document}